\newtheorem{thm}{Theorem}[section]
\newtheorem{definition}[thm]{Definition}
\newtheorem{prop}[thm]{Proposition}
\newtheorem{rem}[thm]{Remark}
\def\BState{\State\hskip-\ALG@thistlm}
\author[1]{Bernadett \'Acs} 
\author[1]{Gergely Szlobodnyik}
\author[1,2]{G\'abor Szederk\'enyi\footnote{corresponding author}}
\affil[1]{\small P\'azm\'any P\'eter Catholic University, Faculty of Information Technology and Bionics, Práter u. 50/a,  H-1083 Budapest, Hungary} 
\affil[2]{\small Process Control Research Group, Systems and Control Laboratory, Institute for Computer Science and Control (MTA SZTAKI) of the Hungarian Academy of Sciences, Kende u. 13-17, H-1111 Budapest, Hungary}
\affil[ ]{\small e-mail: szederkenyi@itk.ppke.hu}
\title{A computational approach to the structural analysis of uncertain kinetic systems}
\date{}
\begin{document}

\maketitle

\begin{abstract}
A computation-oriented representation of uncertain kinetic systems is introduced and analysed in this paper. It is assumed that the monomial coefficients of the ODEs belong to a polytopic set, which defines a set of dynamical systems for an uncertain model. An optimization-based computation model is proposed for the structural analysis of uncertain models. It is shown that the so-called dense realization containing the maximum number of reactions (directed edges) is computable in polynomial time, and it forms a super-structure among all the possible reaction graphs corresponding to an uncertain kinetic model, assuming a fixed set of complexes. The set of core reactions present in all reaction graphs of an uncertain model is also studied. Most importantly, an algorithm is proposed to compute all possible reaction graph structures for an uncertain kinetic model.
\end{abstract}

\noindent\textbf{Keywords}: reaction networks, uncertain models, reaction graphs, algorithms, convex optimization

\section{Introduction}
Kinetic models in the form of nonlinear ordinary differential equations are widely used for describing time-varying physico-chemical quantities in (bio-)chemical environments \cite{Erdi1989}. Moreover, the kinetic system class is dynamically rich enough to characterize general nonlinear behaviour in other application fields as well, particularly where the state variables are nonnegative and the model has a networked structure, such as in the modelling of process systems, population or disease dynamics, or even transportation processes \cite{Haddad2010,Chellaboina2009,Tak:96}. In biochemical applications, the exact values (or even sharp estimates) of the model parameters are often not known, making the models uncertain \cite{Chen2012}. Even when we have measurements of sufficient quantity and quality, the lack of structural or practical identifiability may result in highly uncertain models even with the most sophisticated estimation methods \cite{Craciun2008,Szederkenyi2011c,Chis2016,Chis2011}. This inherent uncertainty was a key factor in the development of Chemical Reaction Network Theory (CRNT), where (among other goals) a primary interest is to study the relations between the network structure and the qualitative properties of the corresponding dynamics, preferably without the precise knowledge of model parameters. From the earlier results of CRNT, we have to mention the well-known Deficiency One and Deficiency
Zero Theorems \cite{Feinberg1987,Son:2001} opening the way towards a structure-based (essentially parameter-free) dynamical analysis of biological networks. Recent particularly important findings in this area are the identification of biologically plausible structural sources of absolute concentration robustness \cite{Shinar2010},  and the proof of the Global Attractor Conjecture \cite{Anderson2011,Craciun2015}.

The efficient treatment of uncertain quantitative models is a fundamental task in mathematics, physics, (bio)chemistry and in related engineering fields \cite{Briggs2016,Guy2015}.  An important early result is \cite{Harrison1979}, where the solutions of linear compartmental systems are studied with uncertain flow rates that are assumed to belong to known intervals. In \cite{Liebermeister2005} a probabilistic framework is proposed for the representation and analysis of uncertain kinetic systems. %
In \cite{Nagy2011} an analytical expression is computed for the temperature dependence of the uncertainty of reaction rate coefficients, and a method is proposed for computing the covariance matrix and the joint probability density function of the Arrhenius parameters.
A recent outstanding result is \cite{Schillings2015}, where a deterministic computation interpolation scheme for uncertain reaction network models is proposed, which is able to handle large-scale models with hundreds of species and kinetic parameters.

The description of model uncertainties using convex sets is often a computationally appealing way of solving model analysis, estimation or control problems \cite{Weinmann1991,Boyd1994}. From the numerous applications, we mention here only a few selected works from different fields. In \cite{Wu2006}, a stabilization scheme was given for nonlinear control system models, where the uncertain coefficients of smooth basis functions in the system equations are assumed to form a polytopic set. An interval representation of fluxes in metabolic networks was introduced in \cite{Llaneras2007}, which enables the computation of the $\alpha$-spectrum even from an uncertain flux distribution. In \cite{Liptak2016}, a nonlinear feedback design method is proposed which is able to robustly stabilize parametrically uncertain kinetic systems using the convexity of the constraint ensuring the complex balance property. Recently, a new approach was given for the stability analysis of general Lotka-Volterra models with polytopic parameter uncertainties in \cite{Badri2017}. 

It is known from the fundamental dogma of chemical kinetics that the reaction graph structure corresponding to a kinetic ODE-model is generally non-unique, even in the case when the rate coefficients are assumed to be known \cite{Horn1972,Erdi1989,Schnell2006}. This property is usually called dynamical equivalence, macro-equivalence or confoundability in the literature \cite{Craciun2008,Horn1972}.  The first solution to the inverse problem, namely the construction of one possible reaction network (called the canonical network) for a given set of kinetic differential equations was described in \cite{Hars1981}. The notion of dynamical equivalence was extended by introducing linear conjugacy of kinetic systems in \cite{Johnston2011conj} allowing a positive diagonal transformation between the solutions of the kinetic differential equations. The simple factorization of kinetic models containing the Laplacian matrix of the reaction graph allows the development of 
efficient methods in various optimization frameworks for computing reaction networks realizing or linearly conjugate to a given dynamics with preferred properties such as density/sparsity \cite{Szederkenyi2009b}, weak reversibility \cite{Johnston2012}, complex or detailed balance \cite{Szederkenyi2011a}, minimal or zero deficiency  
\cite{Johnston2013,Liptak2015}. Using the superstructure property of the so-called dense realizations, it is possible to algorithmically generate all possible reaction graph structures corresponding to linearly conjugate realizations of a kinetic dynamics \cite{Acs2016,Acs2017}.

Even if the monomials of a kinetic system are known, the parameters (i.e., the monomial 
coefficients) are often uncertain in practice. For example, one may consider the situation when a kinetic polynomial 
ODE model with fixed structure is identified from noisy measurement data. In such a case, using the covariance matrix of the 
estimates and the nonnegativity/kinetic constraints for the system model, we can define a simple interval-based (see, e.g. \cite{Llaneras2007}), or more general (e.g., polytopic or ellipsoidal) uncertain model \cite{Ljung1999,Szederkenyi2016a}. Based on the above, the goal of this paper is to extend and illustrate previously introduced notions, computational models and algorithms for kinetic systems with polytopic uncertainty. 
\section{Notations and computational background}
In this section we summarize the basic notions of kinetic polynomial systems and the generalized model defined 
with uncertain parameters.

\bigskip

\noindent The applied general notations are listed below:

\begin{tabular}{ll}
$\mathbb{R}$ & the set of real numbers \\
$\mathbb{R_+}$ & the set of nonnegative real numbers \\
$\mathbb{N}$ & the set of natural numbers \\
$H^{n\times m}$ & the set of matrices having entries from a set $H$ with $n$ rows and $m$ columns\\
$[M]_{ij}$ & the entry of matrix $M$ with row index $i$ and column index $j$\\
$[M]_{.j}$ & the $j$th column of matrix $M$\\
$R_j$ & the $j$th coordinate of vector $R$\\
$\mathbf{0}^n$ & the null vector in $\mathbb{R}^n$\\
$\mathbf{1}^n$ & a vector in $\mathbb{R}^n$ with all coordinates equal to 1\\
$e_i^n$ & a vector in $\mathbb{R}^n$ for which the $i$th coordinate is 1 and all the others are zero
\end{tabular}

\subsection{Kinetic polynomial systems and their models}
Nonnegative polynomial systems are defined in the following general form:
\begin{equation} \label{eq:kinpoly}
\dot{x}=M \cdot \varphi(x)
\end{equation}
where $x: \mathbb{R} \rightarrow \mathbb{R}_{+}^{n}$ is a nonnegative valued function, $M \in \mathbb{R}^{n \times p}$ is a coefficient matrix and   
$\varphi: \mathbb{R}_+^n \rightarrow \mathbb{R}_+^p$ is a monomial-type 
vector-mapping. 
The invariance of the nonnegative orthant with respect to the dynamics \eqref{eq:kinpoly} can be ensured by 
prescribing sign conditions for the entries of matrix $M$ depending on the exponents of $\varphi$, 
see \cite{Erdi1989,Haddad2010}.

In this paper, we treat kinetic models as a general nonlinear system class that is suitable for the description of biochemical reaction networks. Hence, we do not require that all models belonging to the studied class are actually chemically realizable. Several physically or chemically relevant properties such as component mass conservation, detailed or complex balance 
can be ensured by adding further constraints to the computations (see, e.g. \cite{Johnston2012a}).

\begin{definition}
A \textbf{chemical reaction network} (CRN) can be characterized by three sets \emph{\cite{Feinberg:79, Feinberg1987}}.\\
\begin{tabular}{ll}
\textbf{species}: & $\mathcal{S}= \{X_i\  |\  i \in \{1,\ldots,n\}\}$ \\
\textbf{complexes}: & $\mathcal{C} = \{ C_j = \sum \limits_{i=1}^{n} \alpha_{ji} X_i\ |\ \alpha_{ji} \in \mathbb{N}, \  j\in\{1,\ldots,m\}, \  i \in \{1,\ldots,n\}\}$ \\
\textbf{reactions}: & $\mathcal{R} \subseteq \{(C_i,C_j)\  |\  C_i,C_j \in \mathcal{C}\}$  \\
\end{tabular}\\
For all $i,j \in \{1, \ldots m\}$, $i \neq j$ the reaction $C_i \rightarrow C_j$ is represented by the ordered 
pair  $(C_i,C_j)$, and it is described by a nonnegative real number $k_{ij} \in \mathbb{R}^+$ called 
\textbf{reaction rate coefficient}. 
The reaction $C_i \rightarrow C_j$ is present in the reaction network if and only if $k_{ij}$ is strictly 
positive.
\end{definition}

The relation between species and complexes is described by the \textbf{complex composition matrix} 
$Y \in \mathbb{R}^{n \times m}$, the columns of which correspond to the complexes, i.e. 
\begin{equation}
[Y]_{ij}=\alpha_{ji} \qquad \quad i\in \{1, \ldots, n\}, \  j \in \{1, \ldots, m\}
\end{equation}
The presence  of the reactions in the CRN is defined through the rate coefficients as the off-diagonal entries 
of the \textbf{Kirchhoff matrix} $A_k \in \mathbb{R}^{m \times m}$ which is a Metzler compartmental matrix with zero column-sums. Its entries are defined as:
\begin{equation}
[A_k]_{ij}= \begin{cases} 
k_{ji} &\text{ if } i \neq j\\
-\sum \limits_{l=1, l\neq i}^{m} k_{il} & \text{ if } i=j\\
\end{cases} 
\qquad \quad i,j \in \{1, \ldots, m\}
\end{equation}
According to this notation, the reaction $C_i \rightarrow C_j$ takes place in the reaction network if and only 
if $[A_k]_{ji}$ is positive, and $[A_k]_{ji} =0$ implies that $(C_i,C_j) \notin \mathcal{R}$.
Since a chemical reaction network is uniquely characterized by the  matrices $Y$ and $A_k$, we refer to a CRN by the 
corresponding pair $(Y,A_k)$.
  
If mass action kinetics is assumed, the equations governing the dynamics of the \textbf{concentrations of the 
species} in the CRN defined by the function $x:\mathbb{R} \rightarrow \mathbb{R}^n_{+}$ can 
be written in the form:
\begin{equation}\label{eq:crn_dyn}
\dot{x}=Y \cdot A_k \cdot \psi^Y(x)
\end{equation}
where $\psi^Y: \mathbb{R}_+^n \rightarrow \mathbb{R}_+^m$  is the monomial function of 
the CRN with coordinate functions
\begin{equation}\label{eq:monomials}
\psi^Y_j(x)=\prod \limits_{i=1}^{n} x_i^{[Y]_{ij}}, \quad  j\in \{1,\dots,m\}
\end{equation}

The nonnegative polynomial system \eqref{eq:kinpoly} is called a \textbf{kinetic system} if there exists a 
reaction network $(Y,A_k)$ so that its dynamics satisfies the equation \cite{Erdi1989}:
\begin{equation} \label{eq:dyneq}
M\cdot \varphi(x) = Y\cdot A_k\cdot \psi^Y(x)
\end{equation}

As it has been mentioned in the Introduction, reaction networks with different sets of complexes and 
reactions may be governed by the same dynamics. If Equation \eqref{eq:dyneq} is fulfilled, then the CRN 
$(Y,A_k)$ is called a \textbf{dynamically equivalent realization} of the kinetic system \eqref{eq:kinpoly}.

The description of the 
polynomial system \eqref{eq:kinpoly} can be transformed so that the monomial function $\varphi$ is equal to 
$\psi^Y$ (and $p=m$ holds) while the described dynamics remains the same. After the transformation and 
simplification based on the properties of polynomials, Equation \eqref{eq:dyneq} can be simplified to:
\begin{equation} \label{eq_lin}
M = Y \cdot A_k
\end{equation}

Reaction networks have another representation, which is more suitable for illustrating the structural 
properties. It is a weighted directed graph $G(V,E)$ called the \textbf{Feinberg-Horn-Jackson graph} or 
\textbf{reaction graph} for brevity \cite{Erdi1989}. The complexes are represented by the vertices, and the 
reactions by the edges. 
Let the vertices $v_i$ and $v_j$ correspond to the complexes $C_i$ and $C_j$, respectively. Then there is a 
directed edge $v_i v_j \in V(G)$ with weight $k_{ij}$ if and only if the reaction $C_i \rightarrow C_j$ takes 
place in the CRN.

\subsection{Uncertain kinetic systems} \label{ssec:uncertain}

For the uncertainty modelling, we assume that the monomial coefficients in matrix $M$ are constant but uncertain, and they belong to an $n\cdot m$ dimensional polyhedron.

\begin{rem}
In previous sections the set of uncertain parameters is noted as a polytope or a polytopic set, but from now on we use the notion of a polyhedron as well. The former one is defined as the convex hull of its vertices, while the latter one is the intersection of halfspaces, and the two definitions are not equivalent in general.
However, in the examined problems it is assumed that the parameters of the kinetic models are bounded, and a bounded polyhedron is equivalent to a bounded polytope. 
\end{rem}

We represent the matrix $M$ as a point denoted by $\widetilde{M}$ in the 
Euclidean space $\mathbb{R}^{nm}$. In the uncertain model it is assumed that the possible points 
$\widetilde{M}$ are all the points of a closed convex polyhedron $\mathcal{P}$, which is defined as the intersection of $q$ halfspaces. The boundaries of the halfspaces are hyperplanes with normal vectors 
$n_1, \ldots ,n_q \in \mathbb{R}^{nm}$ and constants $b_1, \ldots, b_q \in \mathbb{R}$. 
Applying these notations, the polyhedron $\mathcal{P}$ can be described by a linear 
inequality system as
\begin{align}\label{eq:Mpolyhedron}
\mathcal{P}= \{\widetilde{M} \in \mathbb{R}^{nm} \ | \  {\widetilde{M}}^{\top} \cdot n_i \leq b_i, \ 1 \leq i \leq q\}\}
\end{align}
For the characterization of the polyhedron $\mathcal{P}$ not only the possible values of the parameters 
should be considered, but also the kinetic property of the polynomial system. This can be ensured (see \cite{Hars1981}) by prescribing the sign pattern of the matrix $M$ as follows:
\begin{equation}
[Y]_{ij}=0  \   \Longrightarrow \   [M]_{ij}\geq 0, \quad \qquad i\in \{1, \ldots, n\}, \  j \in \{1, \ldots, m\}
\end{equation}
These constraints are of the same form as the inequalities in Equation \eqref{eq:Mpolyhedron}, for example the 
constraint $\widetilde{M}_j \geq 0$ can be written by choosing the normal vector $n_i$ to be the unit vector $-e_j^{nm}$ and $b_i$ to be the null vector $\mathbf{0}^{nm}$.

We note that there is a special case when the possible values of the parameters of the polynomial system are 
given as intervals, and the polyhedron $\mathcal{P}$ is a cuboid.

It is possible to define a set $L$ of finitely many additional linear constraints on the variables to 
characterize a special property of the realizations, for example a set of reactions to be excluded, or mass 
conservation on a given level, see e.g. \cite{Acs2016}. These constraints can affect not only the entries of 
the coefficient matrix $M$ but the Kirchhoff matrix of the realizations as well.
If the Kirchhoff matrix $A_k$ of the realization is represented by the point 
$\widetilde{A_k} \in \mathbb{R}^{m^2-m}$ storing the off-diagonal elements, and $r$ is the number of constraints in the set $L$, then the 
equations can be written in the form
\begin{equation} \label{eq:L}
\widetilde{M}^{\top} \cdot \alpha_i + {\widetilde{A_k}}^{\top} \cdot \beta_i \leq d_i
\end{equation} 

\noindent where $\alpha_i \in \mathbb{R}^{nm}$, $\beta_i \in \mathbb{R}^{m^2-m}$ and $d_i \in \mathbb{R}$ hold 
for all $i \in \{1, \ldots, r\}$. These constraints do
not change the general properties of the model, and as it will be shown in Section \ref{ssec_opt}, it can 
be modelled as a linear programming problem. 

In the case of the uncertain model, we will examine realizations assuming a fixed set of complexes.
Therefore, the known parameters are the polyhedron $\mathcal{P}$, the set $L$ of constraints and the matrix $Y$.  
Hence a \textbf{constrained uncertain kinetic system} is referred to as the triple $[\mathcal{P},L,Y]$, but we will call 
it an \textbf{uncertain kinetic system} for brevity. 

\begin{definition} A reaction network $(Y,A_k)$  is called a \textbf{realization of the uncertain kinetic 
system} $[\mathcal{P},L,Y]$ if there exists a coefficient matrix $M \in \mathbb{R}^{n\times m}$  
so that the equation $M = Y \cdot A_k$ holds, the point $\widetilde{M}$ is in the polyhedron $\mathcal{P}$ and 
the entries of the matrices $M$ and $A_k$ fulfil the set $L$ of constraints. 
Since the matrix $Y$ is fixed but the coefficients of the polynomial system can vary, this realization is 
referred to as the matrix pair $(M,A_k)$. 
\end{definition}

\subsection{Computational model} \label{ssec_opt}
Assuming a fixed set of complexes, a realization $(M,A_k)$ of an uncertain kinetic system $[\mathcal{P},L,Y]$ 
can be computed using a linear optimization framework. 

In the constraint satisfaction or optimization model, the variables are the entries of the matrix $M$ and the off-diagonal entries of the matrix $A_k$. The constraints regarding the realizations of the uncertain model can be written as follows:
\begin{align}
\  &  \widetilde{M}^{\top} \cdot n_i \leq b_i, & \  & i \in \{1, \ldots, p\} \label{eq_parameter} \\
\  &  M = Y \cdot A_k, & \  &  \   \label{eq_dyneq}\\
\  &  [A_k]_{ij} \geq 0, & \  & i \neq j, \  i,j \in \{1, \ldots, m\} \label{eq_nonneg} \\
\  &  \sum \limits_{j=1}^m [A_k]_{ij} =0, & \  &   j \in \{1, \ldots, m\} \label{eq_columns}
\end{align}
Equations \eqref{eq_parameter} ensure that the parameters of the dynamics correspond to a point of the 
polyhedron $\mathcal{P}$. Dynamical equivalence is defined by Equation \eqref{eq_dyneq}, while Equations 
\eqref{eq_nonneg} and \eqref{eq_columns} are required for the Kirchhoff property of matrix $A_k$ to be 
fulfilled. Moreover, the constraints in the set $L$ can be written in the form of Equation  \eqref{eq:L}.

The objective function of the optimization model can be defined according to the desired properties of the 
realization, for example in order to examine if the reaction $C_i \rightarrow C_j$ can be present in the 
reaction network or not, the objective can be defined as $\max [A_k]_{ji}$.

We apply the representation of realizations of the uncertain model as points 
of the Euclidean space $\mathbb{R}^{m^2-m+nm}$. The coordinates with indices $i \in \{1, \ldots, m^2-m\}$ 
characterize the Kirchhoff matrix of the realization and the remaining coordinates 
$j \in \{m^2-m+1, \ldots m^2-m+nm\}$ define the coefficient matrix $M$ of the polynomial system.  
Due to the linearity of the constraints in the computational model, the set of possible realizations of an 
uncertain kinetic system $[\mathcal{P},L,Y]$ is a convex bounded polyhedron denoted by $\mathcal{Q}$.
\section{Structural analysis of realizations of the uncertain\\ kinetic model} 

In this section we summarize some of the special structural properties of the realizations of an uncertain kinetic system $[\mathcal{P},L,Y]$.

\subsection{Superstructure property of the dense realizations} \label{sec_superstructure}

A dynamically equivalent or linearly conjugate realization of a kinetic system with a fixed set of complexes 
having maximal or minimal number of reactions is called dense or sparse realization, respectively 
\cite{Szederkenyi2009b,Johnston2012}. It is known that for any kinetic system there might be several different 
sparse realizations, however, the dense realization is structurally unique and it defines a superstructure 
among all realizations, see \cite{Johnston2012a}.

The directed graph $G(V,E)$ is called a \textbf{superstructure} with respect to a set $\mathcal{G}$ of directed 
graphs with labelled vertices, if it contains every graph in the set $\mathcal{G}$ as subgraph, and it is 
minimal under inclusion. By the definition it follows that for any set $\mathcal{G}$ there exists a 
superstructure graph and it is unique.

In the case of dynamical equivalent and linearly conjugate realizations of kinetic systems the superstructure 
is the reaction graph of a dense realization, that contains all the reaction graphs representing realizations 
of the kinetic system as subgraphs, 
not considering the edge weights. This means that the set of reactions that take place in any of the 
realizations is the same as the set of reactions in the dense realization.

Dense and sparse realizations can be introduced in the case of the uncertain model as well, that 
are useful during the structural analysis.
\begin{definition}
A realization $(M,A_k)$ of the uncertain kinetic system $[\mathcal{P},L,Y]$ is called a  \textbf{dense (sparse)} realization if it has maximal (minimal) number of reactions.
\end{definition}
It can be proved that the superstructure property holds for uncertain kinetic systems as well, and the 
proof is based on the same idea as in the non-uncertain case, see \cite{Acs2015}.  

\begin{prop} \label{prop_superstr}
A dense realization $(M,A_k)$ of an uncertain kinetic system $[\mathcal{P},L,Y]$  determines a superstructure among all realizations of the model.
\end{prop}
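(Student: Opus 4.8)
The plan is to mimic the convexity argument used in the non-uncertain case. The key observation is that the set $\mathcal{Q}$ of all realizations of $[\mathcal{P},L,Y]$ is a convex bounded polyhedron in $\mathbb{R}^{m^2-m+nm}$, as established at the end of Section \ref{ssec_opt}. For each ordered pair $(i,j)$ with $i\neq j$, the coordinate corresponding to $[A_k]_{ij}$ is a linear (hence convex and concave) function on $\mathcal{Q}$, and it is nonnegative there by constraint \eqref{eq_nonneg}. The reaction $C_j\rightarrow C_i$ is present in a realization precisely when this coordinate is strictly positive, so the support (set of present reactions) of a realization is determined by which of these coordinates are nonzero.

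First I would fix a dense realization $(M,A_k)$ and let $E^\ast$ denote its edge set, i.e. the set of pairs $(i,j)$ with $[A_k]_{ij}>0$. I claim that every realization $(M',A_k')\in\mathcal{Q}$ has edge set contained in $E^\ast$. Suppose not: then there is a realization with some reaction $C_j\rightarrow C_i\notin E^\ast$, i.e. $[A_k']_{ij}>0$ while $[A_k]_{ij}=0$. Form the convex combination $(M'',A_k'')=\tfrac12(M,A_k)+\tfrac12(M',A_k')$, which lies in $\mathcal{Q}$ by convexity. Since each relevant coordinate is nonnegative on $\mathcal{Q}$ and addition of nonnegative numbers can only turn a zero into a positive value (never a positive into a zero), the edge set of $(M'',A_k'')$ contains $E^\ast$ and also contains $(i,j)$; hence it strictly contains $E^\ast$. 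This contradicts the maximality of the number of reactions in the dense realization $(M,A_k)$. Therefore $E'\subseteq E^\ast$ for every realization, so the reaction graph of $(M,A_k)$ contains every reaction graph of the model as a subgraph (ignoring edge weights).

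To finish, I would check minimality under inclusion, so that the dense reaction graph is genuinely the superstructure in the sense of the definition given above: every edge of $E^\ast$ is realized — trivially, by $(M,A_k)$ itself — so no proper subgraph of the dense reaction graph can contain all realizations. Uniqueness of the superstructure is already guaranteed by the definition, which also yields that the edge set $E^\ast$ does not depend on the particular dense realization chosen. I do not expect a serious obstacle here; the only point requiring a little care is the sign/monotonicity bookkeeping in the convex-combination step, namely that the Kirchhoff constraints \eqref{eq_nonneg}–\eqref{eq_columns} and the polyhedral constraints \eqref{eq_parameter}, \eqref{eq:L} are all preserved under convex combination (which holds because they are linear) so that $(M'',A_k'')$ is indeed a legitimate realization. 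Everything else is the same bookkeeping as in \cite{Acs2015,Johnston2012a}.
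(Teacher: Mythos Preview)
Your proposal is correct and follows essentially the same convexity argument as the paper: assume some realization has an edge absent from the dense one, take a convex combination inside $\mathcal{Q}$, and obtain a realization with strictly more reactions, contradicting density. Your explicit check of minimality under inclusion is a small addition beyond what the paper writes out, but the core idea is identical.
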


\begin{proof}
If the point $D$ in the polyhedron $\mathcal{Q}$ of possible realizations represents a dense realization, then 
the superstructure property is equivalent to  the property that any coordinate with index 
$i \in \{1, \ldots, m^2-m\}$ of an arbitrary point in $\mathcal{Q}$ can be positive only if the same 
coordinate of $D$ is positive.
Let us assume by contradiction that there is another realization $R \in \mathcal{Q}$ so that there is an 
index $j \in \{1, \ldots, m^2-m\}$ for which $D_j =0$ and $R_j >0$ hold.

Since the polyhedron $\mathcal{Q}$ is closed under convex combination, the point 
\[ T = c \cdot D + (1-c) \cdot R \qquad c \in (0,1) \]
is also in $\mathcal{Q}$. 
The coordinates with indices of the set $\{1,\ldots,m^2-m\}$ of all the points in 
$\mathcal{Q}$ are nonnegative, therefore such a coordinate of the convex combination is positive if the 
corresponding coordinate of $D$  or $R$ is positive. Consequently, $T$ has more positive coordinates with 
indices $j \in \{1,\ldots,m^2-m\}$ than the dense realization does,  which is a contradiction.  
\end{proof}

It follows from Proposition \ref{prop_superstr} that the structure of the dense realization is unique. 
If there were two different dense realizations, then the reaction graphs representing them would contain each 
other as subgraphs, which implies that these graphs are structurally identical.

The dense and sparse realizations are useful for checking the structural uniqueness of the uncertain model.

\begin{prop} \label{prop_uniqueness}
The dense and sparse realizations of an uncertain kinetic system $[\mathcal{P},L,Y]$ have the same number of 
reactions if and only if all realizations of the model are structurally identical.
\end{prop}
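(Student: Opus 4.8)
The plan is to prove the two implications separately, relying on the superstructure property established in Proposition~\ref{prop_superstr}. First I would observe that "structurally identical" realizations means that they share exactly the same set of reactions (directed edges), equivalently, for every coordinate index $i \in \{1,\ldots,m^2-m\}$, the $i$th coordinate is either positive in all realizations or zero in all realizations. I would work throughout with the representation of realizations as points in the polyhedron $\mathcal{Q} \subseteq \mathbb{R}^{m^2-m+nm}$, and I would fix a point $D$ representing a dense realization and a point $S$ representing a sparse realization.

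For the "if" direction, assume all realizations are structurally identical. Then the dense realization and the sparse realization, being realizations, have the same reaction set, hence the same number of reactions; this direction is essentially immediate from the definition.

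For the "only if" direction, assume the dense and sparse realizations have the same number of reactions, say $k$. Let $E_D$ denote the set of edges (positive off-diagonal coordinates) of $D$ and $E_S$ that of $S$. By Proposition~\ref{prop_superstr}, $D$ is a superstructure, so every realization's edge set is contained in $E_D$; in particular $E_S \subseteq E_D$. Since $|E_S| = |E_D| = k$ and both are finite sets, $E_S = E_D$. Now take an arbitrary realization $R \in \mathcal{Q}$ with edge set $E_R$. Again by the superstructure property, $E_R \subseteq E_D$. The remaining step is to show $E_D \subseteq E_R$ as well. For this I would use the convex combination argument in the spirit of the proof of Proposition~\ref{prop_superstr}: the point $T = c\cdot S + (1-c)\cdot R$ for $c \in (0,1)$ lies in $\mathcal{Q}$, and since the relevant coordinates are all nonnegative, the edge set of $T$ is $E_S \cup E_R = E_D \cup E_R$. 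But $S$ is a sparse realization with the minimal number of reactions, and $T$ is a realization whose edge set contains $E_S = E_D$; so $|E_T| \geq |E_S|$, with equality only if $E_T = E_S$, which forces $E_R \subseteq E_D = E_S$ to combine into no new edges, i.e. $E_R \supseteq$ nothing extra — here I need to be slightly more careful: $T = c S + (1-c) R$ has at least as many edges as $R$, and actually $E_T = E_S \cup E_R \supseteq E_S$, so $|E_T| \ge |E_S| = k$; since $S$ is sparse, every realization has at least $k$ edges, so $|E_R| \ge k$; combined with $E_R \subseteq E_D$ and $|E_D| = k$ we get $E_R = E_D$. Thus every realization has edge set exactly $E_D$, so all realizations are structurally identical.

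The main obstacle is getting the "only if" direction's logic tight: one must be careful that the superstructure bound gives $|E_R| \le k$ for all realizations while the sparse-minimality gives $|E_R| \ge k$ for all realizations, and these two bounds together pin down $|E_R| = k$ and hence (using $E_R \subseteq E_D$, $|E_D|=k$) the set equality $E_R = E_D$. In fact, once this is noticed, the convex combination step is not even needed — the sparsity of $S$ already tells us $|E_R| \ge |E_S| = |E_D| = k$ directly. So the real content is just combining the superstructure upper bound with the sparse lower bound; I would present it cleanly in that form.
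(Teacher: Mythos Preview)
Your proposal is correct and, once you arrive at the clean version (superstructure gives $E_R \subseteq E_D$ with $|E_D|=k$, sparsity gives $|E_R|\ge k$, hence $E_R=E_D$), it is essentially the same argument as the paper's. The convex-combination detour is indeed unnecessary, as you yourself observe.
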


\begin{proof}
According to the definitions if in the dense and sparse realizations there is the same 
number of reactions, then in all realizations there must be the same number of reactions. Since the structure 
of the dense realization is unique, there cannot be two realizations with the maximal 
number of reactions but different structures, therefore all realizations must be structurally identical to 
the dense realization. 

The converse statement is trivial: If all the realizations of the model are structurally identical, then the 
dense and sparse realizations must have identical structures, too.
\end{proof}

\subsection{Polynomial-time algorithm to determine dense realizations} \label{sec_densealgorithm}

A dense realization of the uncertain kinetic system can be computed by the application of a 
recursive polynomial-time algorithm. The basic principle of the method is similar to the one presented in 
\cite{Acs2015}: To each reaction a realization is assigned where the reaction takes place, if it is possible. 
In general, the same realization can be assigned to several reactions. Therefore, there is no need to perform 
a separate computation step for each reaction.
The convex combination of the assigned realizations is also a realization of the uncertain model. 
If all the coefficients of the convex combination are positive then all reactions that take place in 
any of the assigned realizations are present in the convex combination as well.
Consequently, the obtained realization represents a dense realization, where all reactions are 
present that are possible.

The computation can be performed in polynomial time since it requires at most $m^2-m$ steps of LP optimization 
and some minor computation. 

\begin{rem}
It follows from the operation of the algorithm that if there are at least two realizations 
assigned to reactions as defined, then there are infinitely many dense realizations, since at least one 
coefficient of the convex combination can be chosen arbitrarily from the interval $(0,1)$. 
\end{rem}

In the algorithm the assigned realizations are represented as points in $\mathbb{R}^{m^2-m+nm}$ and 
are determined using the following procedure:

\medskip
\noindent \textbf{FindPositive$([\mathcal{P},L,Y],H)$} returns a pair $(R,B)$. The point $R \in \mathcal{Q}$  
represents a realization of the uncertain model $[\mathcal{P},L,Y]$ for which the value of the 
objective function $\sum_{j \in H} R_j$ considering a set $H \subseteq \{1, \ldots, m^2-m\}$ of indices is 
maximal. The other returned object is a set $B$ of indices where $k \in B$ if and only if $Q_k >0$. 
If there is no realization fulfilling the constraints then the pair $(\mathbf{0},\emptyset)$ is returned. \\

In the algorithm we apply the arithmetic mean as convex combination, i.e. if the number of 
the assigned realizations is $k$ then all the coefficients of the convex combination are $\frac{1}{k}$.

\begin{algorithm}[H]
\caption{(Computes a dense realization) \\Input: $[\mathcal{P},L,Y]$\\Output: $Result$ }
\begin{algorithmic}[1]
\State $H:=\{1, \ldots, m^2-m \}$ 
\State $B:= H$
\State $Result:= \mathbf{0} \in \mathbb{R}^{m^2-m+nm}$
\State $loops:= 0$
\While {$ B \neq \emptyset$}
\State $(R,B):=$ \text{FindPositive}$([\mathcal{P},L,Y],H)$
\State $Result := Result + R$
\State $H:=H \setminus B$
\State $loops:=loops+1$
\EndWhile
\State $Result := Result / loops$
\If {$Result=\mathbf{0}$}
\State {\text{There is no realization  with the given properties.}}
\Else 
\State {$Result$ \text{ is a dense realization.}} 
\EndIf
\end{algorithmic}
\end{algorithm}

\begin{prop}
The realization returned by \emph{\textbf{Algorithm 1}} is a dense realization of the uncertain kinetic 
system. 
\end{prop}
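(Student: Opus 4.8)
The plan is to show that the output of Algorithm~1, call it $D = \frac{1}{loops}\sum_{t=1}^{loops} R^{(t)}$ where $R^{(1)}, R^{(2)}, \ldots$ are the successive points returned by \textbf{FindPositive}, is a realization of $[\mathcal{P},L,Y]$ that has the maximal number of reactions. First I would record two structural facts that make the argument go through: (i) the feasible set $\mathcal{Q} \subseteq \mathbb{R}^{m^2-m+nm}$ is a convex polyhedron (established at the end of Section~\ref{ssec_opt}), so any arithmetic mean of points of $\mathcal{Q}$ is again in $\mathcal{Q}$, hence $D \in \mathcal{Q}$ and $D$ is a genuine realization (unless every call returned $(\mathbf{0},\emptyset)$, the degenerate case handled by the \texttt{If}); and (ii) for any index $j \in \{1,\ldots,m^2-m\}$, since all these coordinates are nonnegative on $\mathcal{Q}$, the convex combination satisfies $D_j > 0$ as soon as $R^{(t)}_j > 0$ for at least one $t$. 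So the reaction set of $D$ is exactly the union of the reaction sets of the $R^{(t)}$.

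Next I would prove the key claim: an index $j \in \{1,\ldots,m^2-m\}$ is realizable at all (i.e.\ there exists some $R \in \mathcal{Q}$ with $R_j > 0$) if and only if $D_j > 0$. The ``if'' direction is immediate from $D \in \mathcal{Q}$. For ``only if'', the argument is the loop invariant of the \texttt{While}: I would show that at the start of each iteration, $H$ contains every index $j$ such that no previously returned $R^{(1)},\ldots,R^{(t-1)}$ has $R^{(s)}_j > 0$; equivalently, $H$ is the set of reactions ``not yet witnessed''. Indeed $H$ starts as the full index set, and in each iteration we remove exactly $B = \{k : R^{(t)}_k > 0\}$, the indices just witnessed. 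Now suppose some reaction $j$ is realizable but is never witnessed; then $j$ stays in $H$ forever, so $j$ belongs to the final nonempty $H$ when the loop is entered for the last time. But \textbf{FindPositive}$([\mathcal{P},L,Y],H)$ maximizes $\sum_{k \in H} R_k$ over $\mathcal{Q}$; since $j \in H$ and $j$ is realizable, the optimum is strictly positive, so the returned $R^{(t)}$ has $\sum_{k\in H} R^{(t)}_k > 0$, meaning $R^{(t)}_k > 0$ for at least one $k \in H$, so $B \cap H \neq \emptyset$ and we remove at least one index from $H$. Hence $H$ strictly decreases while it is nonempty and feasible, the loop terminates, and on termination every realizable reaction has been witnessed at least once, so $D_j > 0$ for every realizable $j$.

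Finally I would combine this with Proposition~\ref{prop_superstr}: the dense realization is the one whose positive-coordinate set (restricted to $\{1,\ldots,m^2-m\}$) is maximal, and that set is precisely the set of realizable reactions. Since $D$ has $D_j > 0$ exactly on the realizable indices, $D$ attains this maximal reaction set, hence $D$ is a dense realization; Proposition~\ref{prop_superstr} also guarantees this structure is unique, so $D$ is \emph{the} dense realization up to edge weights. I would also note the termination/complexity remark made in the text: each iteration strictly shrinks $H$, so there are at most $m^2-m$ iterations, each an LP, giving the polynomial-time bound. The main obstacle is not any single computation but pinning down the loop invariant cleanly — in particular verifying that ``$H$ = indices not yet witnessed'' is preserved by line~8 ($H := H \setminus B$) together with the fact that \textbf{FindPositive} returns $B$ as the set of \emph{all} strictly positive coordinates of $R^{(t)}$ (not just those in $H$), so that no realizable reaction can slip through unremoved; and handling the edge case where \textbf{FindPositive} returns $(\mathbf{0},\emptyset)$ mid-run versus only on the first call.
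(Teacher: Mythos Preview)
Your proof is correct and follows the same approach as the paper: convexity of $\mathcal{Q}$ gives that the arithmetic mean is a realization, and then one argues that every realizable reaction is witnessed by some $R^{(t)}$. The paper's proof is considerably terser, simply asserting that ``by the operation of the algorithm'' any reaction present in the dense realization must have been assigned to some $R^{(t)}$; your loop-invariant argument (that $H$ is always the set of not-yet-witnessed indices, and strictly shrinks whenever it contains a realizable index) makes this step explicit and rigorous.
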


\begin{proof}
Since the set of all possible solutions can be represented as a convex polyhedron, 
the point $Result$ computed as the convex combination of realizations is indeed a realization of the 
uncertain kinetic system $[\mathcal{P},L,Y]$. 
Let us assume by contradiction that the returned point $Result$ does not represent the dense realization. Then 
there is a reaction $(C_i,C_j)$ which is present in the dense realization but it does not take place in 
$Result$. By the operation of the algorithm it follows that there must be a realization assigned to the 
reaction $(C_i,C_j)$, consequently this reaction takes place in the realization computed as the convex 
combination of the assigned realizations as well. This is a contradiction.
\end{proof}

\subsection{Core reactions of uncertain models} \label{sec_core}

A reaction is called \textbf{core reaction} of a kinetic system if it is present in every realization of the 
kinetic system \cite{Szederkenyi2011c}. It is possible that there are no core reactions, but there can be several of them as well.
If all the realizations are structurally identical, then by Proposition \ref{prop_uniqueness} it follows that 
each reaction is a core reaction.
The notion of core reactions can be extended to the case of uncertain models in a straightforward way.

\begin{definition}
A reaction $C_i \rightarrow C_j$ is called a \textbf{core reaction} of the uncertain kinetic system 
$[\mathcal{P},L,Y]$  if it is present in each realization of the model, considering all possible coefficient 
matrices $M$ for which $\widetilde{M} \in \mathcal{P}$ holds.
\end{definition}

Let $[\mathcal{P},L,Y]$ and $[\mathcal{P'},L,Y]$ be two uncertain kinetic systems considering the same sets of 
complexes and additional linear constraints so that the polyhedron $\mathcal{P'}$ is a subset of 
$\mathcal{P}$. If the sets of core reactions in 
the models are denoted as $C_{\mathcal{P}}$ and $C_{\mathcal{P'}}$, respectively, then 
$C_{\mathcal{P}} \subseteq C_{\mathcal{P'}}$ must hold. 
This property holds even if $\mathcal{P'}$ is a single point in $\mathbb{R}^{nm}$ and $[\mathcal{P'},L,Y]$ is 
a kinetic system defined as an uncertain kinetic system.

The set of core reactions of an uncertain kinetic system can be computed using a polynomial-time algorithm.  
This method has been first published in \cite{Tuza2015} for a special case, where the coefficients of the 
polynomial system have to be in predefined intervals, therefore the polyhedron $\mathcal{P}$ is a cuboid. 
Since the model applies only the property that all the constraints characterizing the model are linear, it can 
be applied without any modification to uncertain kinetic systems as well.

The question whether a certain reaction is a core reaction of a kinetic model or not, can be answered by 
solving a linear optimization problem. If this question has to be  decided for all possible reactions, the 
computation can be done more effectively than doing separate optimization steps for every reaction. The idea 
is to minimize the sum of variables representing the off-diagonal entries of the Kirchhoff matrix. Generally, several variables in the minimized sum are zero in the computed realization, which means that the 
reactions corresponding to these variables are not core reactions. This step is repeated with the remaining 
set of variables until the computation does not return any non-core reactions. Finally, the remaining variables need to be 
checked one-by-one.

In the algorithm we refer to sets of indices corresponding to the off-diagonal entries of the 
Kirchhoff matrix $A_k$ by their characteristic vectors. The set $B \subseteq \{1, \ldots, m^2-m\}$ represented by the vector $b \in \{0,1\}^{m^2-m}$, which is defined as

\begin{equation}
b_i= \begin{cases} 
1 &\text{ if } i \in B\\
0 & \text{ if } i \notin B\\
\end{cases} 
\end{equation}

The procedure applied during the computation is more formally the following: 

\bigskip

\noindent \textbf{FindNonCore}$([\mathcal{P},L,Y],b)$ computes a realization of the uncertain kinetic system 
$[\mathcal{P},L,Y]$ represented as a point $R \in \mathbb{R}^{m^2-m+nm}$, for which the sum of the coordinates 
with indices in the set $B \in \{1, \ldots, m^2-m\}$ is minimal. 
The procedure returns the vector $c$, the characteristic vector of the set $C$ which 
contains the indices corresponding to zero entries of the Kirchhoff matrix of the realization $R$, i.e. 
$C \subseteq \{1, \ldots, m^2-m\}$ and $\left[ i \in C \   \Longleftrightarrow \   R_i=0 \right]$. 

\bigskip

\noindent We also need to utilize some operations on the sets represented by their characteristic vectors:
\begin{tabular}{ll}
$b*c$ & represents the set $B \cap C$, i.e. it is an element-wise `logical and'\\
$\overline{c}$ & represents the complement of the set $C$, 
i.e. it is an element-wise negation. 
\end{tabular}

\begin{algorithm}[H]
\caption{(Computes the set of core reactions) \\Inputs: $[\mathcal{P},L,Y]$\\Output: $b$  }
\begin{algorithmic}[1]
\State $b:=\mathbf{1}^{m^2-m}$ 
\State $c:=b$
\While{$c \neq \mathbf{0}$}
\State $c:=$ \text{FindNonCore}$([\mathcal{P},L,Y],b)$
\State $c:= c * b$  
\State $b:= b * \overline{c}$
\EndWhile
\For {$i= 1$ to $m^2-m$} 
\If {$b_i \neq 0$}
\State $c:=$ \text{FindNonCore}$([\mathcal{P},L,Y],e_i^{m^2-m})$
\State $b:= b * \overline{c}$
\EndIf
\EndFor
\If {$b = \mathbf{0}$}
\State {There are no core reactions of the model $[\mathcal{P},L,Y]$.}
\Else 
\State{The vector $b$ characterizes the core reactions of the model $[\mathcal{P},L,Y]$.}
\EndIf
\end{algorithmic}
\end{algorithm}

\begin{prop} \emph{\textbf{Algorithm 2}} computes the set of core reactions of an uncertain kinetic system $[\mathcal{P},L,Y]$ in polynomial time.
\end{prop}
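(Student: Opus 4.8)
The plan is to prove two things: first, that Algorithm 2 terminates and correctly outputs the characteristic vector of the set of core reactions; second, that it runs in polynomial time. Correctness and complexity can be argued together by tracking what the vector $b$ represents at each stage of the execution. I would maintain the invariant that after every iteration of the \texttt{while} loop (and every iteration of the \texttt{for} loop), the set currently represented by $b$ is a \emph{superset} of the true set of core reactions: every index removed from $b$ corresponds to a reaction that is genuinely absent in some realization, hence is genuinely not a core reaction. This is immediate from the specification of \textbf{FindNonCore}: the returned vector $c$ marks exactly the off-diagonal Kirchhoff entries that vanish in the computed realization $R \in \mathcal{Q}$, and since $R$ is a bona fide realization of $[\mathcal{P},L,Y]$, each such reaction fails to be present in at least that one realization. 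The operations $c := c*b$ and $b := b*\overline{c}$ only ever shrink $b$, so the invariant is preserved.

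Next I would argue termination of the \texttt{while} loop and the correctness of the set that survives it. Each pass of the loop either finds $c = \mathbf{0}$ (and exits) or removes at least one index from $b$; since $b$ starts as a vector of length $m^2-m$, the loop runs at most $m^2-m+1$ times. When the loop exits with $c=\mathbf{0}$, the last call to \textbf{FindNonCore} with the current $b$ returned a realization in which \emph{none} of the reactions indexed by the current $b$ is simultaneously absent — but this does not yet certify that each of them individually is a core reaction, only that they cannot all be killed at once by a single realization. That is exactly why the \texttt{for} loop is needed: for each index $i$ still in $b$, calling \textbf{FindNonCore} with the singleton objective $e_i^{m^2-m}$ minimizes that single coordinate over $\mathcal{Q}$; the reaction is a core reaction if and only if this minimum is positive, i.e. if and only if $i \notin C$, i.e. if and only if the corresponding entry of the returned $c$ is $0$, in which case $b := b*\overline{c}$ leaves $b_i$ untouched. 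Conversely, if the minimum is $0$, then there is a realization avoiding reaction $i$, so $i$ is correctly removed. After the \texttt{for} loop, $b$ represents precisely the set of indices $i$ for which $\min_{R\in\mathcal{Q}} R_i > 0$, which is by definition the set of core reactions. Combined with the superset invariant (which guarantees no core reaction was ever discarded), this shows $b$ is exactly the characteristic vector of the core reactions, and the final \texttt{if}/\texttt{else} reports this correctly.

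For the complexity claim, the \texttt{while} loop performs at most $m^2-m+1$ calls to \textbf{FindNonCore}, and the \texttt{for} loop performs at most $m^2-m$ further calls, for a total of $O(m^2)$ calls. Each call to \textbf{FindNonCore} amounts to solving a single linear program: the feasible region is the polyhedron $\mathcal{Q} \subseteq \mathbb{R}^{m^2-m+nm}$ cut out by the linear constraints \eqref{eq_parameter}--\eqref{eq_columns} together with the linear constraints of the form \eqref{eq:L} from the set $L$, and the objective is the linear functional summing a subset of the coordinates; linear programs are solvable in polynomial time in the number of variables and constraints. The set operations $c*b$, $\overline{c}$ and $b*\overline{c}$ are element-wise on vectors of length $m^2-m$ and cost $O(m^2)$ each. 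Hence the total running time is a polynomial in $m$, $n$, $q$, $r$ and the bit-size of the data describing $\mathcal{P}$ and $L$, which is what is meant by ``polynomial time'' here. The only delicate point — and the one I would be most careful to state precisely rather than wave at — is the equivalence ``$i$ is a core reaction $\iff \min_{R\in\mathcal{Q}} R_i > 0$''; this rests on the fact, established via the computational model in Section~\ref{ssec_opt}, that every point of $\mathcal{Q}$ is the representation of a genuine realization of $[\mathcal{P},L,Y]$ and conversely, so that existential and universal statements over realizations translate faithfully into feasibility/optimization statements over $\mathcal{Q}$.
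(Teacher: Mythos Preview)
Your proof is correct and follows essentially the same approach as the paper: the paper argues by contradiction in two cases (a core reaction wrongly removed, a non-core reaction wrongly retained), which correspond exactly to your superset invariant and your analysis of the singleton minimizations in the \texttt{for} loop, respectively. Your presentation is more thorough---you explicitly address termination of the \texttt{while} loop and articulate why the \texttt{for} loop is needed after it---and your LP-count bound of $O(m^2)$ is slightly looser than the paper's claim of at most $m^2-m$ calls (which follows from the observation that each call either is the last \texttt{while} call or strictly shrinks $b$, so the \texttt{while} and \texttt{for} loops together make at most $m^2-m+1$ calls), but both bounds suffice for polynomial time.
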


\begin{proof}
Let us assume by contradiction that the algorithm does not return the proper set of core reactions. There can be two different types of error: 

$a)$ Let us assume that there is an index $i$ for which the corresponding reaction is a core reaction, but 
according to the algorithm it is not. In this case there must be a realization $R$ computed by the algorithm 
so that $R_i$ is zero. This is a contradiction.

$b)$ Let us assume that there is an index $j$ for which the corresponding reaction is not a core reaction but 
the algorithm returns the opposite answer. Consequently, after the while loop of the computation (from line 8) the 
coordinate $b_j$ must be equal to $1$. Then the remaining possible core reactions are examined one by one, 
therefore the procedure FindNonCore$([\mathcal{P},L,Y],e_j^{m^2-m})$ is also applied. According to the assumption the 
realization $R$ computed by the procedure must be so that $R_j$ is zero, which also yields a contradiction.

The computation according to the algorithm can be performed in polynomial time, since it requires the solution 
of at most $m^2-m$ LP optimization problems and some additional minor computation steps.
\end{proof}

\section{Algorithm to determine all possible reaction graph structures of uncertain models} \label{sec_alg}
In this section we introduce an algorithm for computing all possible reaction graph structures of an 
uncertain kinetic system $[\mathcal{P},L,Y]$. The proposed method is an 
improved version  of the algorithm published in \cite{Acs2017}, where all the optimization steps can be done 
parallelly. We also give a proof of the correctness of the presented method. Before presenting the pseudocode of the algorithm, we give a brief explanation of its data structures and operating principles.

We represent reaction graph structures by binary sequences, where each entry encodes the presence or lack of a 
reaction.
During the algorithm, all data (i.e. the Kirchhoff and the coefficient matrices) of the realizations are 
computed, but only the binary sequences encoding the directed graph structures are stored and returned as 
results.

According to the superstructure property described in Proposition 
\ref{prop_superstr},  only the reactions belonging to the dense realization need representation and storage. 
Moreover, if there are core reactions as well, then the coordinates corresponding to these can also be omitted. 
Both sets can be computed in polynomial time as it has been presented in Sections \ref{sec_densealgorithm} and 
\ref{sec_core}.

Let us refer to the set of reactions in the dense realization and the set of core reactions in the uncertain 
kinetic system $[\mathcal{P},L,Y]$ as $D_{\mathcal{P}}$ and $C_{\mathcal{P}}$, respectively. 
Then a realization of the uncertain model $[\mathcal{P},L,Y]$ can be represented by a binary 
sequence $R$ of length $z$, where $z$ is the size of the set $D_{\mathcal{P}} \setminus C_{\mathcal{P}}$ of 
non-core reactions in the dense realization.
To define the binary sequence $R$ it is necessary to fix an ordering on the set of non-core reactions.  
The coordinate $R_i$ is equal to $1$ if and only if the $i$th non-core reaction is 
present in the realization, otherwise it is zero.

It is easy to see that knowing its structure, a realization can be determined in polynomial time: 
For each reaction $C_i \rightarrow C_j$ which is known not to be present in the realization the constraint 
$[A_k]_{ji}=0$ needs to be added to the constraint set $L$, and a dense realization of the (constrained) model 
has to be computed. 
Since it is known that there exists a realization where all non-excluded reactions take place, all of them 
have to be present in the computed constrained dense realization, consequently it will have exactly the 
prescribed structure. 

During the computation the initial substrings of the binary sequences have a special role. Therefore, for all 
$k \in \{1, \ldots z\}$ a special equivalence relation $=_k$ is defined on the binary sequences. We say that
$R =_k W$ holds if for all  $i \in \{1, \ldots k\}$ the coordinate $R_i$ is equal to $W_i$. 
The equivalence class of the relation $=_k$ that contains the sequence $R$ as a representative is referred to 
as $C_k(R)$. (We note that in general there are several representatives of an equivalence class.)
The elements of an equivalence class $C_k(R)$ can be characterized by a set of linear constraints added to the 
model. According to this property and Proposition \ref{prop_superstr}, the dense realization in $C_k(R)$ 
determines a superstructure among all the realizations in the same set.
The procedure FindRealization applied during the algorithm computes dense realizations of the uncertain 
model determined by the initial substrings. A realization is referred to as a pair $(R,k)$ if the corresponding realization represents the dense realization in $C_k(R)$. The realizations represented by such pairs get stored for some time in a stack $S$, the command `push $(R,k)$ into $S$' puts the pair $(R,k)$ into the stack and `pop from $S$' takes a pair out of the stack and returns it. 
The number of elements in the stack $S$ is denoted by $size(S)$.

The result of the entire computation is collected in a binary array called $Exist$, where all the computed 
graph structures are stored. The indices of the elements are the sequences as 
binary numbers, and the value of element $Exist[R]$ is equal to $1$ if and only if 
a realization with the structure encoded by $R$ has been found. 

Considering the data structures, the main difference between the proposed method and the algorithm presented 
in \cite{Acs2017} is that the sequences encoding the reaction graph structures are stored in only one stack in 
our current solution. Furthermore, the optimization steps using the sequences popped from this stack can be run in parallel. However, in this case the use of the binary array $Exist$ is necessary.

Within the algorithm we repeatedly apply two subroutines:

\medskip

\noindent \textbf{FindRealization}$((R,k),i)$ computes a dense realization of the uncertain kinetic system 
$[\mathcal{P},L,Y]$, for which the representing binary sequence $W$ is in $C_k(R)$, and for every index 
$j \in \{k+1,\ldots, i\}$ the coordinate $W_j$ is zero. 
It is possible that 
among the first $k$ coordinates there are more zeros than required, therefore the computed sequence $W$ is 
compared to the sequence $R$. The procedure returns the sequence $W$ only if $W=_k R$ holds, otherwise $-1$ is returned. If the optimization task is infeasible then the returned object is also $-1$.

\medskip

\noindent \textbf{FindNextOne}$((R,k))$ returns the smallest index $i$  for which $k<i$ and $R_i=1$ hold. If 
there is no such index, i.e. $R_j$ is zero for all $k<j$, then it returns $z+1$, where we recall that $z$ is the length of the sequences that encode the graph structures.

\medskip

Let the sequence $D=\mathbf{1}$ represent the dense realization. Then the pseudocode of the algorithm for computing all possible graph structures can be given as follows.
\begin{algorithm}[H] \label{alg:all}
\caption{(Computes all reaction graph structures of an uncertain kinetic system) \\Inputs: $[\mathcal{P},L,Y],D,z$\\Output: $Exist$ }
\begin{algorithmic}[1]
\State push $(D,0)$ into $S$ 
\State $Exist[D]:=1$
\While{$size(S)>0$}
\State $(R,k):=\text{pop from }S$
\State $i:=\text{FindNextOne}((R,k))$
\If {$i<z$}
\State push $(R,i)$ into $S$
\EndIf
\While {$i < z$}
\State $W:=\text{FindRealization}((R,k),i)$
\If {$W < 0$}
\State BREAK
\Else 
\State $i:=\text{FindNextOne}(W,i)$
\State $Exist[W]:=1$
\If {$i<z$}
\State push $(W,i)$ into $S$
\EndIf
\EndIf
\EndWhile 
\EndWhile
\end{algorithmic}
\end{algorithm}
Using the description of the algorithm, we can give formal results about its main properties.
\begin{prop} \label{prop:all_reals} \emph{\textbf{Algorithm 3}} computes all possible reaction graph structures representing realizations of an uncertain kinetic system $[\mathcal{P},L,Y]$. 
\end{prop}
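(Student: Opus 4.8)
The plan is to prove the statement by establishing two inclusions: \emph{soundness} --- every binary word $W$ with $Exist[W]=1$ is the structure of a genuine realization of $[\mathcal{P},L,Y]$ --- and \emph{completeness} --- every realizable structure eventually receives $Exist[W]:=1$.

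Soundness is the easy half. An entry of $Exist$ is set only in line 2 (for $W=D$, the structure of a dense realization, hence realizable) or in line 14, where $W$ is a value $\ne-1$ returned by \textbf{FindRealization}. By the specification of that subroutine, such a $W$ is the structure of a dense realization of the model obtained from $[\mathcal{P},L,Y]$ by adjoining to $L$ finitely many exclusion constraints $[A_k]_{ji}=0$; these being linear, the constrained model is again of the form $[\mathcal{P},L',Y]$, and by Section~\ref{sec_densealgorithm} its dense realization is a bona fide realization of $[\mathcal{P},L,Y]$. That \textbf{FindRealization} returns $-1$ only when the constrained model is infeasible or when its dense realization no longer lies in $C_k(R)$ --- so that no realizable structure is silently discarded --- follows from its specification together with Proposition~\ref{prop_superstr}.

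The substance of the proof is completeness, which I would handle via the ``prefix tree'' of a fixed realizable structure $V$ (a binary word of length $z$). For $0\le k\le z$ let $d(V,k)$ denote the structure of the dense realization among all realizations whose structure lies in the class $C_k(V)$. By Proposition~\ref{prop_superstr}, applied to the model constrained to $C_k(V)$, this is well defined; moreover $d(V,0)=D=\mathbf 1$, $d(V,z)=V$, $d(V,k)=_kV$, and --- since any coordinate equal to $0$ in $d(V,k)$ is forced to $0$ in every realization with that length-$k$ prefix --- the word $d(V,k)$ changes, as $k$ grows, only at finitely many indices. I would then prove by induction on $j$ that the run of Algorithm~3 on $[\mathcal{P},L,Y]$ visits a chain of pairs $(D,0)=(V^{(0)},k_0),(V^{(1)},k_1),\dots,(V^{(t)},k_t)$ with $0=k_0<k_1<\dots<k_t\le z$, $V^{(j)}=d(V,k_j)$ and $V^{(t)}=V$, in such a way that each $V^{(j)}$ gets $Exist[V^{(j)}]:=1$; taking $j=t$ then gives $Exist[V]=1$, which is the claim.

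For the inductive step one traces the code at the moment $(V^{(j)},k_j)$ is popped: \textbf{FindNextOne} returns the first index $i>k_j$ with $V^{(j)}_i=1$, every coordinate strictly between $k_j$ and $i$ being $0$ in $V^{(j)}$ and hence forced, so it agrees with $V$. If $i>z$ then the zeros of $V^{(j)}$ past $k_j$ are all forced, whence $V^{(j)}=V$ and we are done. If $V_i=1$, line 7 pushes $(V^{(j)},i)$, the next link of the chain, with $V^{(j+1)}=V^{(j)}=d(V,i)$. If $V_i=0$, then $i$ is a ``branching'' index and the inner \textbf{while} loop --- which repeatedly invokes \textbf{FindRealization}$((V^{(j)},k_j),\cdot)$ with strictly increasing cutoffs --- returns, in order, the dense realizations attached to the successive branching indices of $V$ above $k_j$: each such constrained model is feasible because $V$ itself satisfies it, and its dense realization agrees with $V^{(j)}$ on the first $k_j$ coordinates, so \textbf{FindRealization} does not abort; in particular the loop marks $V=d(V,k_t)$ and pushes an appropriate continuation. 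Termination holds because every push strictly increases the (bounded) second coordinate of the stored pair, and the cutoff used inside an inner loop strictly increases, so only finitely many pushes occur; the correctness of running the optimization steps in parallel is then automatic, since distinct pops spawn independent \textbf{FindRealization} computations whose only shared mutable object is $Exist$, and writes to $Exist$ are idempotent. The step I expect to be the main obstacle is precisely the $V_i=0$ case of the induction: matching the loop structure of Algorithm~3 to the combinatorics of the prefix tree of $V$, and checking that the ``soft'' handling of the prefix (reactions are excluded, never forced present, with only an a-posteriori comparison against $R$) neither loses a realizable structure nor makes \textbf{FindRealization} return $-1$ prematurely --- this is exactly where Proposition~\ref{prop_superstr} must be invoked with care.
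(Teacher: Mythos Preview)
Your proposal is correct and rests on the same two observations as the paper's proof: that $V$ itself witnesses the feasibility of every \textbf{FindRealization} call encountered while tracing its prefix, and that the cutoff indices form a strictly increasing sequence bounded by $z$. The organization, however, is different. The paper argues by contradiction: assuming $V$ is never marked, it selects a pair $(R,p)$ that was in $S$ with $V=_pR$ and $p$ \emph{maximal}, and then shows that when $(R,p)$ is popped the inner loop must produce some $(W,j)$ with $V=_jW$ and $j>p$, contradicting maximality of $p$. Your forward induction traces out the same chain constructively instead of locating its last link via an extremal choice. This buys you explicit treatments of soundness and termination (which the paper defers to the surrounding remarks), at the price of the extra bookkeeping you correctly anticipate in the $V_i=0$ case --- where one must check that the line-7 push $(V^{(j)},i)$ is \emph{not} the next chain link, and that among the inner-loop pushes $(W_s,i')$ the next link is the first one with $V_{i'}=1$ (so that $W_s=d(V,i')$), the preceding ones satisfying only $W_s=d(V,i_{s-1})$. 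Your sentence ``in particular the loop marks $V=d(V,k_t)$'' slightly overstates what happens in a single inner loop: in general the inner loop only produces the next link, and $V$ is marked after further outer-loop iterations; but the inductive scheme you set up handles this correctly.
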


\begin{proof} 
Let us assume by contradiction that there is a realization of the uncertain kinetic system $[\mathcal{P},L,Y]$ 
represented by the sequence $V$ which is not returned by \textbf{Algorithm 3}. 
Let $R$ be another sequence that was stored in the stack $S$ as $(R,p)$ at some point during the computation,  
for which $V =_p R$ holds and $p$ is the greatest such number. 
If $p=0$ then $D$ is suitable to be $R$, and by the operation of the algorithm it follows that $p<z$ holds. (If 
$p$ were equal to $z$, then $V$ would be equivalent to $R$ which is a contradiction.)

There is a point during the computation when $(R,p)$ is popped out from the stack $S$.   
Let us assume that $\text{FindNextOne}(R,p)$ returns $i$ and $\text{FindNextOne}(V,p)$ returns $j$. 
In this case $i \leq j$ must hold since $R$ represents the superstructure in $C_p(R)$ and if $i$ were equal to 
$j$ then $p$ would not be maximal.

For the examination of sequence $R$, the procedure $\text{FindRealization}((R,p),i)$ is applied first 
(line 10), and it must return a valid sequence $W_1$, since its constrains are fulfilled by the realization 
$V$ as well. 
If $\text{FindNextOne}(W_1,p)$ is $j_1$ then $j_1 \leq j$ must hold, since $W_1$ represents the
superstructure in $C_i(W_1)$ and $V$ is also in $C_i(W_1)$. If $j_1$ was equal to $j$ then $p$ 
would not be maximal. Otherwise, 
the computation can be continued by calling the procedure $\text{FindRealization}((R,p),j_1)$. It must 
return a valid sequence $W_2$ for which we get that $\text{FindNextOne}(W_2,p)=j_2 \leq j$ holds by applying 
similar reasoning as above. 

These steps must lead to contradiction either by $p$ not being maximal or by creating an infinite increasing 
sequence of integers that has an upper bound.

It follows that every possible reaction graph structure that represents a realization of the uncertain kinetic 
system $[\mathcal{P},L,Y]$ is returned by the algorithm.
\end{proof}

\begin{rem} Since the calculations of procedure \emph{FindRealization}$((R,k),i)$ are independent of the 
results of previous calls of the same procedure, the order of the calls is irrelevant regarding the result 
of the entire computation.
\end{rem}

\begin{rem} The proof of \emph{Proposition 3.2} in \emph{\cite{Acs2017}} can be applied for verifying the property 
that during the computation according to \emph{\textbf{Algorithm 3}} every reaction graph 
structure is returned only once. 
\end{rem}

\begin{rem} We can also give an upper bound to the number of required optimization steps by considering the 
realizations $(R,k)$ regarding $k$. For all $k$ the number of possible realizations $R$ stored in the stack 
$S$ is at most $2^k$. When such a realization is popped from the stack the required optimization steps is at 
most $z-k$. Consequently, a rough upper bound to the number of optimization steps 
required during \emph{\textbf{Algorithm 3}} can be given as $\sum \limits_{k=0}^{z-1} 2^k (z-k)$. 
\end{rem}

\section{Illustrative examples}
In this section we demonstrate the operation of the algorithms presented in this paper on two examples in case 
of different degrees and types of uncertainties, and even in the case of additional linear constraints.

\subsection{Example 1: a simple kinetic system} \label{ssec:example1}
The model that serves as a basis for this example was presented previously in \cite{Szederkenyi2011,Acs2016}. 
The uncertain model is generated using the kinetic system

\begin{align}
\  & \dot{x}_1 = 3c_1 \cdot x_2^3-c_2 \cdot x_1^3  \nonumber \\
\  & \dot{x}_2 = -3c_1 \cdot x_2^3+c_2 \cdot x_1^3, \label{eq:simplesys} 
\end{align}
where $c_1,c_2>0$.
We consider realizations on a fixed set $\mathcal{C}=\{C_1,C_2, C_3\}$ of 
complexes, where the complexes $C_1 = 3X_2$, $C_2 = 3X_1$, $C_3 = 2X_1+X_2$ are formed of the species $X_1$ 
and $X_2$. It follows that the characterizing matrices $Y$ and $M$ of the kinetic system referred to as 
$[M,Y]$ are
\begin{center}
$Y= \begin{bmatrix}
0 & 3 & 2 \\
3 & 0 & 1
\end{bmatrix} 
\qquad 
M= \begin{bmatrix*}[r]
3c_1 & -c_2 & 0 \\
-3c_1 & c_2 & 0
\end{bmatrix*} $
\end{center}
During the numerical computations the parameter values $c_1 = 1$ and $c_2 = 2$ were applied. 

\bigskip

\noindent \textit{A. Uncertainty defined by independent intervals}\\
This model represents a special case in the class of uncertain kinetic systems defined in Section 
\ref{ssec:uncertain}, since the possible values of every coefficient of the kinetic system are determined by  
independent upper and lower bounds that are defined as relative distances. Let us represent the entry  $[M]_{ij}$ of the coefficient matrix $M$ by the coordinate $\widetilde{M}_l$ of the point $\widetilde{M} \in \mathbb{R}^6$. Moreover, let the relative distances of the upper and lower bounds of $\widetilde{M}_l$ be given by the real constants $\gamma_l$ and $\rho_l$ from the interval $[0,1]$, respectively. Then the 
equations defining the polyhedron $\mathcal{P}_A \subset \mathbb{R}^6$ of the uncertain parameters can be 
written in terms of the coordinates $\widetilde{M}_l$ as
\begin{align}
\widetilde{M_{\  }}^{\top} \cdot e_l^6 \leq (1+ \gamma_l) \cdot [M]_{ij}\label{eq:uncert_1}\\
\widetilde{M_{\  }}^{\top} \cdot (-e_l^6) \leq (\rho_l - 1) \cdot [M]_{ij}\label{eq:uncert_2}
\end{align}
In the examined uncertain kinetic system $[\mathcal{P}_A,L,Y]$ no additional linear 
constraints are considered, i.e. $L = \emptyset$.

In \cite{Acs2016} all possible reaction graphs -- with the indication of the reaction rate constants defined 
as functions of the parameters $c_1$ and $c_2$ -- representing dynamically equivalent realizations of the 
kinetic system $[M,Y]$ have been presented. 
Obviously, these structures must appear among the realizations of the uncertain kinetic model $[\mathcal{P}_A,\emptyset,Y]$ as well, but  there might be additional possible structures among the realizations of the uncertain kinetic system.

Interestingly, the result of the computation was that in the case of any degree of uncertainty ($\gamma_l, \rho_l \in [0,1)$ for all $ l\in \{1, \ldots 6\}$), the sets of possible 
reaction graph structures of the uncertain model $[\mathcal{P}_A,\emptyset,Y]$ and that of the non-uncertain system 
$[M,Y]$ are identical. This result might be contrary to expectations, but for this small example it is easy to prove that the obtained graph structures are indeed correct for   
all positive values of the parameters $c_1$ and $c_2$. For this, we divide the computation into smaller 
steps. 

It has been shown in \cite{Acs2017} that in the case of dynamically equivalent realizations the computation  
can be done column-wise (since the $j$th 
column of matrix $A_k$  depends only on the $j$th column of matrix $M$). These computations can be performed 
separately, and all the possible reaction graph structures can be constructed by choosing a column structure 
for every index $j \in \{1, \ldots,m\}$ and building the Kirchhoff matrix $A_k$ of the realization from them.
Consequently, if in the case of the $j$th column the number of different structures is $p_j$, then the number 
of structurally different realizations is $\prod \limits_{j=1}^m p_j$. 

First the original kinetic system $[M,Y]$ is examined. 
To make the notations less complicated, the entries of the Kirchhoff matrix are denoted by the corresponding reaction rate coefficients, i.e. $[A_k]_{ij} = k_{ji}$ for all $i,j \in \{1,2,3\}$, $i\neq j$. 

In the case of the first column we get:
\begin{equation}
Y \cdot 
\begin{bmatrix}
-k_{12}-k_{13} \\ k_{12} \\ k_{13}
\end{bmatrix} 
= \begin{bmatrix}
3c_1\\
-3c_1
\end{bmatrix} \quad k_{12}, k_{13} \in \mathbb{R}^+ \qquad \Longrightarrow \qquad  k_{12} \in [0,c_1], \ k_{13}=\frac{3}{2}c_1 - \frac{3}{2}k_{12}
\end{equation}

\noindent It can be seen that for every positive value of the parameter $c_1$ the two corresponding reaction 
rates can realize 3 of the $2^2 =4$ possible structurally different solutions. Both can be positive, or either one 
can be positive while the other one is zero. (Possible outcomes are for example: 
$k_{12}= \frac{1}{2}c_1 , k_{13}=\frac{3}{4}c_1$ or $k_{12}= 0, k_{13}=\frac{3}{2}c_1$ or 
$k_{12}= c_1 , k_{13}=0$.) The fourth case, when both $k_{12}$ and $k_{13}$ are zero is 
possible only when $[M]_{.1}= [0 ~~ 0]^{\top}$, which requires the corresponding parameters of uncertainty 
$\rho_i$ to be at least one. %

In the case of the second column, 3 of the 4 possible outcomes can be realized and a similar reasoning 
can be applied:
\begin{equation}
Y \cdot 
\begin{bmatrix}
k_{21} \\ -k_{21}-k_{23} \\ k_{23}
\end{bmatrix} 
= \begin{bmatrix}
-c_2\\
c_2
\end{bmatrix} \quad k_{21},k_{23} \in \mathbb{R}^+ \qquad \Longrightarrow \qquad  k_{21} \in (0,\frac{c_2}{3}), \quad k_{23}=c_2-3k_{21} 
\end{equation}
In the third column there is no uncertainty because  there are only zero entries in $[M]_{.3}$. 
Consequently,  in the case of $[A_k]_{.3}$ only 2 solutions are possible. The two corresponding reactions can 
either be both present or both missing.
\begin{equation}
Y \cdot 
\begin{bmatrix}
k_{31} \\ k_{32} \\ -k_{31}-k_{32}
\end{bmatrix} 
= \begin{bmatrix}
0\\
0
\end{bmatrix} \quad k_{31},k_{32} \in \mathbb{R}^+ \qquad \Longrightarrow \qquad  k_{31} \in \mathbb{R}^+, \quad k_{32}=2k_{31} 
\end{equation}
It follows from the above computations that the number of possible reaction graph structures is $3\cdot 3 \cdot 2=18$, and the generated 
structures are identical to the ones presented in \cite{Acs2016}.
This number could be larger only if all the reaction rates in the first or second column of $A_k$ can be zero, 
but this requires the entries in the corresponding column $[M]_{.1}$ or $[M]_{.2}$ to be zero.

\bigskip

\noindent \textit{B. Uncertainty defined as a general polyhedron}\\
Now we examine the uncertain kinetic system that was also generated from the kinetic system $[M,Y]$, but the 
set $\mathcal{P}_B$ of possible coefficients is defined as a more general polyhedron. 

If the matrix $M$ of coefficients is represented by the point $\widetilde{M} \in \mathbb{R}^6$ so that 
$\widetilde{M_{\  }}^{\top} = [M_{11},M_{12},M_{13},M_{21},M_{22},M_{23}]$, then let the equations determining the polyhedron $\mathcal{P}_B$ be the following:
\begin{align}
\widetilde{M_{\  }}^{\top} \cdot (-e_1^6) & \leq 0\nonumber\\
\widetilde{M_{\  }}^{\top} \cdot (-e_5^6) & \leq 0\nonumber\\
\widetilde{M_{\  }}^{\top} \cdot e_3^6 & = 0\nonumber\\
\widetilde{M_{\  }}^{\top} \cdot e_6^6 & = 0\label{eq:poly_unc}\\
\widetilde{M_{\  }}^{\top} \cdot [1,1,0,1,1,0]^{\top} & =0\nonumber\\
\widetilde{M_{\  }}^{\top} \cdot [0,-1,0,-1,0,0]^{\top} & \leq 7\nonumber\\
\widetilde{M_{\  }}^{\top} \cdot [-1,0,0,0,1,0]^{\top} & \leq -1\nonumber
\end{align}
In this case, again, no additional linear constraints are considered in the uncertain model, i.e. we 
examine the uncertain model $[\mathcal{P}_B, \emptyset, Y]$. The computation of all possible reaction graph 
structures shows that in addition to the structures realizing the non-uncertain kinetic system $[M,Y]$, there are 6 more possible 
structures, presented in Figure \ref{fig:triangle}.

It can be seen that the point $\widetilde{M}_1^{\top} = [3,-2,0,-3,2,0]$ corresponding to the original kinetic 
system is in the polyhedron $\mathcal{P}_B$, therefore the 18 structures determined by its realizations must 
be among the realizations of the uncertain kinetic system. 
Then we can apply a reasoning similar to that in Section 5.1.$A$. 
Since the entries in column $[M]_{.3}$ are all zero in every point of $\mathcal{P}_B$, only the two outcomes 
that appear in the case of the original kinetic system $[M,Y]$ are possible in the case of this column.
The uncertain model can have more realizations than the original kinetic system only if all the reaction rates 
in at least one of the columns $[A_k]_{.1}$ or $[A_k]_{.2}$ can be zero. This is possible only if all the 
entries in $[M]_{.1}$ or $[M]_{.2}$ are zero.
From the constraints of the polyhedron $\mathcal{P}_B$ it follows that $[M]_{11} \geq 1$, consequently
the column $[M]_{.1}$ cannot be zero. But $[M]_{.2}$ can have only zero entries, for example the point 
$\widetilde{M}_2 = [3,0,0,-3,0,0] ^{\top} \in \mathcal{P}_B$ satisfies this property. 
For the columns of the matrices $M$ and $M_2$ the following hold: $[M_2]_{.1} = [M]_{.1}$ and 
$[M_2]_{.3} = [M]_{.3}$. Therefore, for the first and third columns of $A_k$ there are 3 and 2 possible 
outcomes, respectively. Since in the case of the second column there is one additional possible outcome, the 
number of further reaction graph structures (compared to the original kinetic system $[M,Y]$) is 
$3\cdot 2= 6$. It is easy to see that these are exactly the ones presented in Figure $\ref{fig:triangle}$ with 
the indicated reaction rate coefficients for an arbitrary  $p > 0$.

\begin{figure}[H]
\begin{center}
    \includegraphics[width=0.88\textwidth]{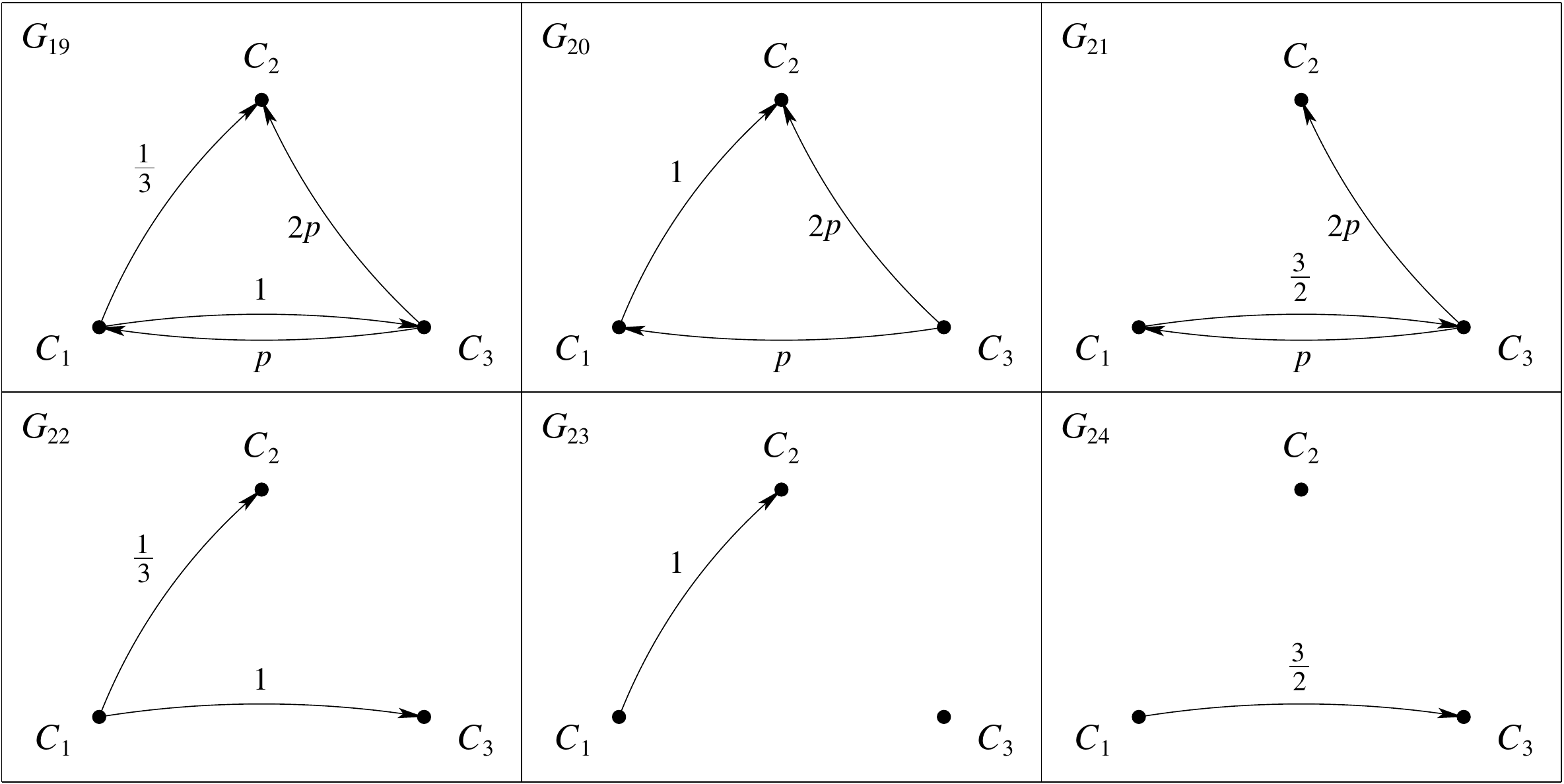}
\end{center}
	\caption{Possible reaction graph structures of the uncertain kinetic system 
	$[\mathcal{P}_B, \emptyset, Y]$ in addition to the realizations of the kinetic system $[M,Y]$ published 
	in \cite{Acs2016}.}
  \label{fig:triangle}
\end{figure}

\subsection{Example 2: G-protein network}
The G-protein (guanine nucleotide-binding protein) cycle has a key role in several intracellular signalling 
transduction pathways. The G-protein located on the intracellular surface of the cell membrane is activated by 
the binding of specific ligand molecules to the G-protein coupled receptor of the extracellular membranes 
surface. The activated G-protein dissociates to different subunits which take part in intracellular signalling 
pathways. After the termination of the signalling mechanisms, the subunits become inactive and bind into each 
other \cite{Lodish2000}.

We examined the structural properties of the yeast G-protein cycle using the model published in \cite{Yi2003}. 
The model involves a so-called heterotrimeric G-protein containing three different subunits. In response to 
the extracellular ligand binding, the protein dissociates to G-$\alpha$ and G-$\beta \gamma$ subunits, where 
the active and inactive forms of the G-$\alpha$ subunit can also be distinguished. 

The reaction network model involves the following species: $R$ and $L$ represent the receptor and the 
corresponding ligand, respectively, $RL$ refers to the ligand-bound receptor, $G$ is the G-protein located on 
the intracellular membrane surface, $G_a$ and $G_d$ denote the active and the inactive forms of the G-$\alpha$ 
subunit and $G_{bg}$ is the G-$\beta \gamma$ subunit.

The model can be characterized as a chemical reaction network $(Y,A_k)$, where the structures of the complexes 
and the reactions are defined by the complex composition matrix $Y \in \mathbb{R}^{7 \times 10}$ and the 
Kirchhoff matrix $A_k \in \mathbb{R}^{10 \times 10}$ as follows:

\begin{align*}
Y & = \begin{bmatrix}
1 & 0 & 1 & 0 & 0 & 0 & 0 & 0 & 0 & 0\\
0 & 0 & 1 & 0 & 0 & 0 & 0 & 0 & 0 & 0\\
0 & 1 & 0 & 0 & 0 & 0 & 1 & 0 & 0 & 0\\
0 & 0 & 0 & 1 & 0 & 0 & 1 & 0 & 0 & 0\\
0 & 0 & 0 & 0 & 1 & 0 & 0 & 1 & 0 & 0\\
0 & 0 & 0 & 0 & 0 & 0 & 0 & 1 & 1 & 0\\
0 & 0 & 0 & 0 & 0 & 1 & 0 & 0 & 1 & 0\end{bmatrix}\\
\   & \   \\
A_{k} & = \begin{bmatrix}
-0.4 & 0 & 0 & 0 & 0 & 0 & 0 & 0 & 0 & 4000\\
0 & -14 & 0.322 & 0 & 0 & 0 & 0 & 0 & 0 & 0\\
0 & 10 & -0.322 & 0 & 0 & 0 & 0 & 0 & 0 & 0\\
0 & 0 & 0 & 0 & 0 & 0 & 0 & 0 & 1000 & 0\\
0 & 0 & 0 & 0 & -11000 & 0 & 0 & 0 & 0 & 0\\
0 & 0 & 0 & 0 & 11000 & 0 & 0 & 0 & 0 & 0\\
0 & 0 & 0 & 0 & 0 & 0 & -0.01 & 0 & 0 & 0\\
0 & 0 & 0 & 0 & 0 & 0 & 0.01 & 0 & 0 & 0\\
0 & 0 & 0 & 0 & 0 & 0 & 0 & 0 & -1000 & 0\\
0.4 & 4 & 0 & 0 & 0 & 0 & 0 & 0 & 0 & -4000 \end{bmatrix}
\end{align*}
The kinetic system that is realized by the model is $\dot{x} = M \cdot \psi^Y = Y\cdot A_k \cdot \psi^Y$, i.e. 
$M = Y\cdot A_k \in \mathbb{R}^{7 \times 10}$. 
The reaction graph structure of the G-protein model can be seen in Figure \ref{fig:Gprotein} with the 
indication of the linkage classes. (The linkage classes are the undirected connected components of the 
reaction graph.)

\begin{figure}[H]
\begin{center}
    \framebox{\includegraphics[width=0.7\textwidth]{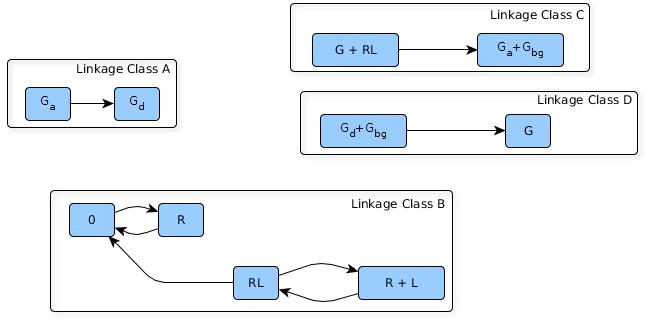}}
\end{center}
  \caption{Reaction graph structure of the heterotrimeric G-protein cycle.}
  \label{fig:Gprotein}
\end{figure}
The computation of all possible reaction graph structures and the solution of the linear equations shows that 
the heterotrimeric G-protein cycle with the given parametrization is not just structurally but also 
parametrically unique. Thus the prescribed dynamics without uncertainty cannot be realized by any other set of 
reactions or different reaction rate coefficients using the given set of complexes. 

\bigskip

\noindent \textit{A. Uncertainty defined with independent relative distance intervals}

\bigskip

We have examined the uncertain kinetic systems defined by relative parameter uncertainty as it was presented 
in Section \ref{ssec:example1}. 

First we examined the uncertain model $[\mathcal{P}_{0.1},\emptyset,Y]$, where the uncertainty coefficients 
$\gamma_l$ and $\rho_l$ for all $l \in \{1, \ldots , 70\}$ are $0.1$ and there are no additional linear 
constraints in the model. By computing all possible reaction graph structures and the set of core reactions of 
this uncertain kinetic system, we obtained that all the reactions in the original G-protein cycle are core 
reactions. Moreover, in the dense realization there are 10 further reactions, and these can be present in the 
realization independently of each other. Consequently, the total number of different graph structures is 
$2^{10}=1024$. Figure \ref{fig:distribution} shows the number of possible reaction graph structures with 
different number of reactions. The dense realization for this case is shown in Figure \ref{fig:Gprotein_M1}.

\begin{figure}[H]
\begin{center}
    \includegraphics[width=0.6\textwidth]{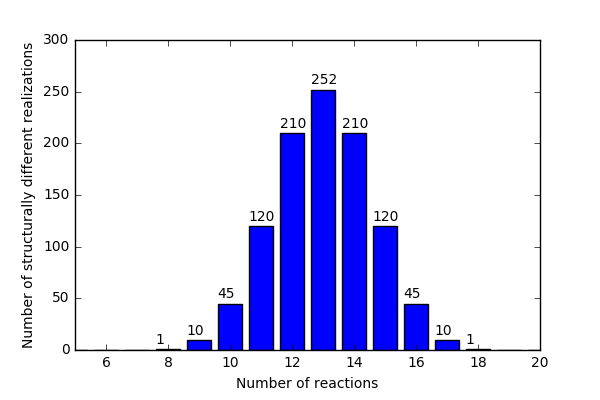}
\end{center}
  \caption{The number of structurally different realizations of the uncertain kinetic system $[\mathcal{P}_{0.1},\emptyset,Y]$ with different number of reactions.}
  \label{fig:distribution}
\end{figure}

If we increase the relative uncertainty to 0.2, we obtain the uncertain kinetic system $[\mathcal{P}_{0.2},\emptyset,Y]$ with $\gamma_l=\rho_l=0.2$ for $i=1,\dots,70$. In this case, the reaction $RL \  \rightarrow 0$ 
is no longer a core reaction, and it can also be added or removed independently of all 
other reactions (which remain independent of each other). Therefore,
the number of possible structures becomes $2^{11}=2048$.

\begin{figure}[H]
\begin{center}
    \framebox{\includegraphics[width=0.6\textwidth]{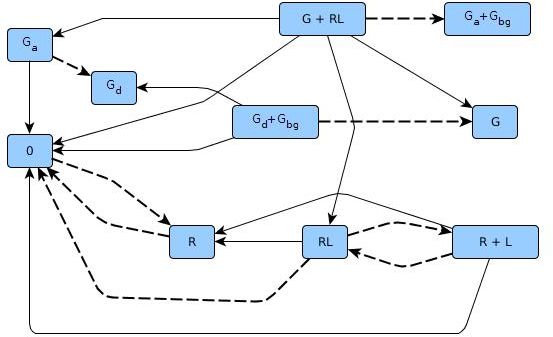}}
\end{center}
  \caption{Reaction graph structure of the dense realization of the uncertain kinetic system $[\mathcal{P}_{0.1},\emptyset,Y]$. The core reactions are drawn with dashed lines.}
  \label{fig:Gprotein_M1}
\end{figure}

\bigskip

\noindent \textit{B. Constrained uncertain model}

\bigskip

We have also examined the possible structures in the case of constrained uncertain models. The set $L_1$ of 
constraints prohibits every reaction between different linkage classes. It can be seen in Figure 
\ref{fig:Gprotein_L1} that the dense realization of the uncertain kinetic system $[\mathcal{P}_{0.1},L_1,Y]$  
has 3 reactions that are exactly the ones that are present in the dense realization of 
$[\mathcal{P}_{0.1},\emptyset,Y]$ and do not connect different linkage classes. 
These reactions are independent of each other, therefore the number of structurally different realizations is 
$2^3=8$ in the case of the uncertain kinetic system $[\mathcal{P}_{0.1},L_1,Y]$ and $2^4=16$ for the model 
$[\mathcal{P}_{0.2},L_1,Y]$.  The sets of core reactions are the same as in the case of the unconstrained model for both degrees of uncertainty.
\begin{figure}[H]
\begin{center}
    \framebox{\includegraphics[width=0.6\textwidth]{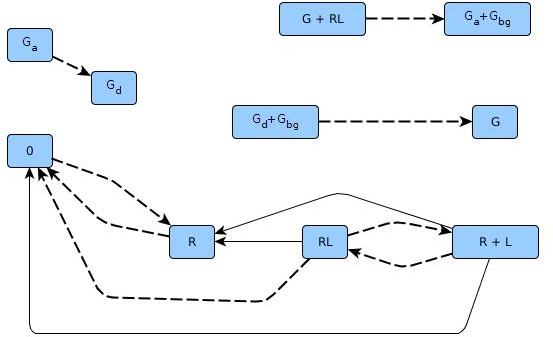}}
\end{center}
  \caption{Reaction graph structure of the dense realization of the uncertain kinetic system $[\mathcal{P}_{0.1},L_1,Y]$. The core reactions are drawn with dashed lines.}
  \label{fig:Gprotein_L1}
\end{figure}
The independence of non-core reactions is a special property of the studied uncertain model. As a consequence of this and the superstructure property of the dense realization, the dense realization of the constrained model will contain each reaction of the unconstrained model that is not excluded by the constraints. 

We emphasize that the dense realizations in the above example contain all mathematically possible reactions that can be compatible with the studied uncertain models. If, using prior knowledge, the biologically non-plausible reactions are excluded and/or certain relations between model parameters are ensured via linear constraints, then the described methodology is still suitable to check the structural uniqueness of the resulting uncertain kinetic model.

\section{Conclusion}
The set of reaction graph structures realizing uncertain kinetic models was studied in this paper. For this, an uncertain polynomial model class was introduced, where the coefficients of monomials belong to a polytopic set. Thus, an uncertain kinetic model includes a set of kinetic ordinary differential equations. Using the convexity of the parameter set, it was proved that the unweighted dense reaction graph containing the maximum number of reactions corresponding to an uncertain model, forms a superstructure among the possible realizations assuming a fixed complex set. This means that any unweighted reaction graph realizing any kinetic ODE within an uncertain model is a subgraph of the unweighted directed graph of the dense realization.

To search through the possible graph structures, an optimization-based computational model was introduced, where the decision variables are the reaction rate coefficients, and the entries of the monomial coefficient matrix. It was shown that the dense realization can be computed in polynomial time using linear programming steps. An algorithm was proposed to compute those `invariant' reactions (called core reactions) of uncertain models, that are present in any realization of the uncertain model. Most importantly, an algorithm with correctness proof was also proposed in the paper for enumerating all possible reaction graph structures for an uncertain kinetic model.

The theoretical results and proposed algorithms were illustrated on two examples. The examples show that the proposed approach is suitable for the structural uniqueness analysis of uncertain kinetic models.
\section*{Acknowledgements}
This project was developed with the support of the PhD program of the Roska Tam\'as Doctoral School of Sciences and 
Technology, Faculty of Information Technology and Bionics, P\'azm\'any P\'eter Catholic University, Budapest. 
The authors gratefully acknowledge the support of grants PPKE KAP-1.1-16-ITK, and K115694 of the National Research, Development and Innovation Office - NKFIH.

\end{document}